\documentclass[10pt]{amsart}

\calclayout

\usepackage{amsmath, amssymb}
\usepackage{amsfonts}
\usepackage{mathrsfs}
\usepackage[color, arrow,matrix,curve,cmtip,ps]{xy}
\usepackage{paralist}
\usepackage{quiver}
\usepackage{amsthm}
\usepackage{caption}
\usepackage{tikz-cd}
\usetikzlibrary{arrows,calc,matrix}
\tikzset{
curvarr/.style={
  to path={ -- ([xshift=2ex]\tikztostart.east)
    |- (#1) [near end]\tikztonodes
    -| ([xshift=-2ex]\tikztotarget.west)
    -- (\tikztotarget)}
  }
}
\tikzset{%
    symbol/.style={%
        draw=none,
        every to/.append style={%
            edge node={node [sloped, allow upside down, auto=false]{$#1$}}}
    }
}

\usepackage[final]{pdfpages}
\usepackage{dsfont,txfonts}
\usepackage{tikz}
\usepackage{rotating}
\PassOptionsToPackage{hyphens}{url}
\usepackage{hyperref}
\usepackage{enumitem}
\usepackage{graphicx}
\usepackage{mathtools}
\usetikzlibrary{arrows,chains,matrix,positioning,scopes}

\usepackage[T1]{fontenc}
\usepackage[variablett]{lmodern}
\usepackage{xcolor}
\usepackage[citestyle=alphabetic,bibstyle=alphabetic]{biblatex}
\usepackage{lipsum}
\usepackage[most]{tcolorbox}

\newtheorem{theorem}{Theorem}
\theoremstyle{definition}

\newtheorem{proposition}[theorem]{Proposition}
\newtheorem{corollary}[theorem]{Corollary}

\newtheorem{remark}[theorem]{Remark}

\newtheorem{question}[theorem]{Question}

\newtheorem{example}[theorem]{Example}

\everymath{\displaystyle}
\allowdisplaybreaks

\newcommand{\Fun}{\text{Fun}}

\newcommand{\Sh}{\mathrm{Sh}}
\newcommand{\Cosh}{\mathrm{Cosh}}
\newcommand{\Sp}{\mathrm{Sp}}
\newcommand{\An}{\mathrm{An}}

\newcommand{\Cat}{\mathrm{Cat}}
\newcommand{\PR}{\mathrm{Pr}}

\newcommand{\Ind}{\mathrm{Ind}}
\newcommand{\perf}{\mathrm{perf}}

\newcommand{\cont}{\mathrm{cont}}

\newcommand{\op}{\mathrm{op}}
\newcommand{\dual}{\mathrm{dual}}

\newcommand{\Br}{\mathrm{Br}}

\DeclareFieldFormat{postnote}{#1}
\DeclareFieldFormat{multipostnote}{#1}

\begin{document}
\title{A remark on the invariance of $K$-theory under duality}
\author{Georg Lehner}
\subjclass[2020]{Primary: 19D99, 18F25, Secondary: 18N60, 18F20, 16K50}
\keywords{Algebraic $K$-theory, localizing invariants, $\infty$-categories of sheaves on a space, Brauer groups.}
\begin{abstract}
In this short remark, we explain that two examples of invariance under duality for a localizing invariant $F$ hold purely formally when $F$ is $K$-theory, whereas the general statement for arbitrary localizing invariants does not reduce to a formal statement. We record a counterexample to the claim that the universal localizing invariant is invariant under the operation of taking opposite categories, originally due to Tabuada.
\end{abstract}
\maketitle

We will follow the general conventions on $K$-theory of dualizable categories as laid out in \cite{efimov2025ktheorylocalizinginvariantslarge} and \cite{krause_nikolaus_puetzstueck}. Let $\mathcal{C}$ be a dualizable $\infty$-category, and $F : \Cat^\perf \rightarrow \mathcal{E}$ a finitary localizing invariant in the sense of \cite{blumberg_gepner_tabuada}. In general, one does not have $\mathcal{C} \simeq \mathcal{C}^\vee = \mathrm{Fun}^L( \mathcal{C}, \Sp )$. However, in the following two examples, it still holds that \[F^{\cont}( \mathcal{C}) \simeq F^{\cont}( \mathcal{C}^\vee).\]

\begin{example} \label{example1}
It was remarked in the previous article \cite{lehner2026algebraicktheorystablycompact} by the author that for a stably locally compact space $X$ there is an equivalence
\[ F^{\cont}( \Sh( X, \Sp ) ) \simeq F^{\cont}( \Cosh( X, \Sp ) ),\]
where $ \Cosh( X, \Sp ) = \Fun^L( \Sh( X, \Sp ) , \Sp ) = \Sh( X, \Sp )^\vee$ is the $\infty$-category of cosheaves on $X$. This equivalence, in the case where $X$ is locally compact Hausdorff, is a trivial consequence of Verdier duality, but Verdier duality in the sense that $\Sh( X, \mathcal{C} ) \simeq \Cosh( X, \mathcal{C} )$ fails for stably locally compact spaces, and a version using the de Groot dual $X^\vee$ of $X$ needs to be used.
\end{example}

\begin{example} \label{example2}
If $P$ is a continuous poset, two independent calculations due to Efimov \cite[Section 5]{efimov2025ktheorylocalizinginvariantslarge} show that
\[ F^{\cont}( \Sh( P, \Sp ) ) \simeq \bigoplus_{p \in P^\omega} F(  \mathbb{S} ) \simeq F^{\cont}( \Cosh( P, \Sp) ).\]
We note that $\Cosh( P, \mathcal{C} ) \simeq \Fun^L( \Sh( P, \Sp), \mathcal{C} )$, see \cite[Section 21.1]{lurieSAG}.
\end{example}

Now consider the involution $(-)^{\op} : \Cat^\perf \simeq \Cat^\perf$, which sends a stable idempotent complete $\infty$-category to its opposite.

\begin{theorem}
There exists a natural equivalence of functors $ \Cat^\perf \rightarrow \Sp$,
\[ K \simeq K \circ (-)^{\op}. \]
\end{theorem}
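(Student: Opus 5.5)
The plan is to realize $K$ through Waldhausen's $S_\bullet$-construction and to compare $S_\bullet(\mathcal{C})$ with $S_\bullet(\mathcal{C}^{\op})$ directly. Recall that $K(\mathcal{C}) \simeq \Omega |S_\bullet(\mathcal{C})^{\simeq}|$ for $\mathcal{C} \in \Cat^\perf$. Connective $K$-theory determines the nonconnective theory by the usual Bass delooping, which is built functorially from Calkin-type constructions compatible with $(-)^{\op}$; so it suffices to produce a natural equivalence of simplicial spaces
\[ S_\bullet(\mathcal{C})^{\simeq} \simeq S_\bullet(\mathcal{C}^{\op})^{\simeq} \circ \rho, \]
where $\rho : \Delta \to \Delta$ is the reversal involution $[n] \mapsto [n]$, $i \mapsto n-i$.

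\textbf{Step 1: the levelwise equivalence.} An $n$-simplex of $S_\bullet(\mathcal{C})$ is a diagram $A : \mathrm{Ar}([n]) \to \mathcal{C}$ with $A_{ii} = 0$ and bicartesian squares. It is equivalently determined by the filtration
\[ 0 = A_{00} \to A_{01} \to \cdots \to A_{0n}. \]
Passing to $\mathcal{C}^{\op}$ reverses all arrows. The diagram $A$, viewed in $\mathcal{C}^{\op}$ and precomposed with the isomorphism $\mathrm{Ar}([n])^{\op} \cong \mathrm{Ar}([n])$ given by $(i\le j) \mapsto (n-j \le n-i)$, is again a Waldhausen diagram. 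This uses that bicartesian squares in $\mathcal{C}$ are exactly bicartesian squares in $\mathcal{C}^{\op}$, and that zero objects are preserved. Concretely, the new filtration is the tower of quotients $A_{0n} \to A_{1n} \to \cdots \to A_{nn} = 0$, read in $\mathcal{C}^{\op}$. This gives an equivalence $S_n(\mathcal{C}) \simeq S_n(\mathcal{C}^{\op})$ of $\infty$-categories, hence of their cores.

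\textbf{Step 2: simpliciality and naturality.} Next I would check that these levelwise equivalences are compatible with faces and degeneracies after twisting by $\rho$. They come from a single natural isomorphism of posets $\mathrm{Ar}(-)^{\op} \cong \mathrm{Ar}(\rho(-))$ of functors on $\Delta$, so the compatibility is formal. Naturality in exact functors $\mathcal{C} \to \mathcal{D}$ is likewise formal, because everything is defined by precomposition.

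\textbf{Step 3: conclusion.} Geometric realization is invariant under precomposition with $\rho$: the realization of $X \circ \rho$ is that of $X$, since $\rho$ is an automorphism of $\Delta$ and colimits over $\Delta^{\op}$ are unchanged by it. Also, $\Omega$ applied to the basepoint component is canonical. Together these give $K(\mathcal{C}) \simeq K(\mathcal{C}^{\op})$, naturally in $\mathcal{C}$. The same argument runs through the iterated $S_\bullet$-construction, which yields the equivalence as spectra rather than merely as infinite loop spaces.

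\textbf{Main obstacle.} The main difficulty is making Step 2 fully coherent in the $\infty$-categorical setting, rather than checking it only levelwise. I would handle this by writing $S_\bullet$ as $\Fun^{\mathrm{ex}}$-type mapping objects out of the cosimplicial poset $\mathrm{Ar}([\bullet])$, so that the equivalence is induced by an isomorphism of cosimplicial posets.

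A subtler point is the extension from connective $K$ to nonconnective $K$. If the Bass/Calkin delooping is awkward to check, I would instead invoke the universal property: nonconnective $K$ is the initial localizing invariant under connective $K$ in the appropriate sense. Then $K \circ (-)^{\op}$, being again a localizing invariant agreeing with $K$ on connective covers, must coincide with $K$.
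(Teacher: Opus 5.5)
Your argument is correct in outline, but it takes a genuinely different route from the paper.

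The paper's proof is purely formal. Since $(-)^{\op}$ is an automorphism of $\Cat^\perf$ preserving localizing sequences and filtered colimits, $K\circ(-)^{\op}$ is again a localizing invariant. The groupoid core is invariant under $(-)^{\op}$, so $K\circ(-)^{\op}$ has the same universal property as $K$ (the universal localizing invariant under $(-)^{\simeq}$), and hence agrees with $K$. This gives the nonconnective, spectrum-level, natural statement in one step, with no coherence bookkeeping.

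You instead build an explicit equivalence from the self-duality of the $S_\bullet$-construction under the reversal $\rho$ of $\Delta$. This is the $\infty$-categorical form of Quillen's $Q(\mathcal{E}^{\op})\cong Q(\mathcal{E})^{\op}$. It gives a concrete model, and for connective $K$ it also works where no universal property is at hand, e.g.\ exact or Waldhausen categories with their dual structures. The cost is a separate delooping step.

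Two details need fixing.

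\emph{Step 1.} The correct levelwise statement is $S_n(\mathcal{C})^{\op}\simeq S_n(\mathcal{C}^{\op})$, not $S_n(\mathcal{C})\simeq S_n(\mathcal{C}^{\op})$. The latter is false in general already for $n=1$, where it would say $\mathcal{C}\simeq\mathcal{C}^{\op}$. This is harmless once you pass to cores, because $\infty$-groupoids are naturally equivalent to their opposites.

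\emph{The delooping.} The Calkin construction is \emph{not} compatible with $(-)^{\op}$. The category $(\Ind(\mathcal{C})^{\omega_1})^{\op}$ has countable products rather than coproducts, so $\mathrm{Calk}(\mathcal{C}^{\op})\not\simeq\mathrm{Calk}(\mathcal{C})^{\op}$ in general. You can repair this in either of two ways.
\begin{itemize}
\item Deloop $\mathcal{C}^{\op}$ along the opposite of the Calkin sequence. Its middle term is still $K$-trivial by an Eilenberg swindle using products. Naturality of your connective equivalence then identifies the boundary maps.
\item Use your fallback, stated precisely: $K$ is the initial localizing invariant under the additive invariant $K^{\mathrm{cn}}$, and precomposition with $(-)^{\op}$ preserves this property. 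Once you argue this way, you could equally apply a universal property to $K$ directly, which is exactly the paper's proof.
\end{itemize}

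Finally, the canonical identification $|X\circ\rho|\simeq|X|$ reverses simplices. So your equivalence acts by $[x]\mapsto -[x]$ on $K_0$. The paper's equivalence is compatible with the identification of cores and therefore sends $[x]\mapsto [x]$. Both are legitimate natural equivalences.
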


\begin{proof}
Observe that the functor $(-)^{\simeq} : \Cat^\perf \rightarrow \An$ of taking the groupoid core is invariant under $(-)^{\op}$. Since $K$-theory is the universal localizing invariant equipped with a natural transformation $(-)^{\simeq} \rightarrow \Omega^\infty K$, see \cite{blumberg_gepner_tabuada}, the same holds for $K$-theory.
\end{proof}

This result extends immediately to dualizable stable $\infty$-categories. The functor $(-)^\vee : \PR^L_{\dual} \rightarrow \PR^L_{\dual}, \mathcal{C} \mapsto \mathcal{C}^\vee = \Fun^L(\mathcal{C}, \Sp)$ gives an involution, \cite[Section 1.7]{efimov2025ktheorylocalizinginvariantslarge}. If $\mathcal{C} = \Ind(\mathcal{C}_0)$ is compactly generated, then $\mathcal{C}^\vee = \Ind(\mathcal{C}_0^{\op})$, hence $(-)^\vee$ extends the involution $(-)^{\op}$ to dualizable $\infty$-categories.

\begin{corollary}
There exists a natural equivalence of functors $ \PR^L_{\dual} \rightarrow \Sp$,
\[ K^{cont} \simeq K^{cont} \circ (-)^\vee. \]
\end{corollary}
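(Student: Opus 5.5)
The plan is to reduce the statement to the preceding Theorem via Efimov's description of continuous $K$-theory. Recall from \cite{efimov2025ktheorylocalizinginvariantslarge} that for a dualizable $\mathcal{C}$ one chooses a compactly generated presentation, namely the fully faithful strongly continuous embedding $\hat{y} : \mathcal{C} \hookrightarrow \Ind(\mathcal{C}^{\omega_1})$, or equivalently a short exact sequence in $\PR^L_{\dual}$
\[ \mathcal{C} \rightarrow \Ind(\mathcal{A}) \rightarrow \Ind(\mathcal{B}) \]
with $\mathcal{A}, \mathcal{B} \in \Cat^\perf$. Then $K^{\cont}(\mathcal{C}) \simeq \mathrm{fib}(K(\mathcal{A}) \rightarrow K(\mathcal{B}))$, and this is independent of the choice, functorially in $\mathcal{C}$ (indeed $K^{\cont}$ is the unique localizing invariant of dualizable categories extending $K$ on compactly generated ones).

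The first step is to check that $(-)^\vee$ is an exact involution of $\PR^L_{\dual}$: it preserves short exact sequences, possibly reversing their direction. Dualizing the sequence above gives a short exact sequence
\[ \Ind(\mathcal{B}^{\op}) \rightarrow \Ind(\mathcal{A}^{\op}) \rightarrow \mathcal{C}^\vee, \]
using $\Ind(\mathcal{A})^\vee \simeq \Ind(\mathcal{A}^{\op})$, since the dual of a strongly continuous fully faithful functor is a localization and conversely. Applying $K^{\cont}$ and the preceding Theorem gives
\[ K^{\cont}(\mathcal{C}^\vee) \simeq \mathrm{cofib}(K(\mathcal{B}^{\op}) \rightarrow K(\mathcal{A}^{\op})) \simeq \mathrm{cofib}(K(\mathcal{B}) \rightarrow K(\mathcal{A})) \]
with the transfer-type map. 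The second step is to compare this with $\mathrm{fib}(K(\mathcal{A}) \rightarrow K(\mathcal{B}))$.

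I expect this comparison to be the main obstacle, since the maps go in opposite directions and a cofiber is not a fiber. What I would try first avoids it. I would present $\mathcal{C}$ instead as a retract of compactly generated categories, or use the Calkin construction: $K^{\cont}(\mathcal{C}) \simeq \Omega K(\mathrm{Calk}(\mathcal{C}))$ with $\mathrm{Calk}$ functorial. The key fact to prove is then the natural equivalence $\mathrm{Calk}_{\omega_1}(\mathcal{C}^\vee) \simeq \mathrm{Calk}_{\omega_1}(\mathcal{C})^{\op}$, or more simply $(\mathcal{C}^\vee)^{\omega_1} \simeq (\mathcal{C}^{\omega_1})^{\op}$ compatibly with the compact parts. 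Granting this, the preceding Theorem applied to the small category $\mathrm{Calk}$ finishes the argument.

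For the alternative approach via universality, note that $K^{\cont} \circ (-)^\vee$ is again a finitary localizing invariant on $\PR^L_{\dual}$, because $(-)^\vee$ is an exact autoequivalence. Its restriction along $\Ind : \Cat^\perf \rightarrow \PR^L_{\dual}$ is $K \circ (-)^{\op}$, which the Theorem identifies naturally with $K$. Efimov's uniqueness of extension then gives $K^{\cont} \circ (-)^\vee \simeq K^{\cont}$ naturally. This last route is the one I would actually write up, checking only that $(-)^\vee$ preserves short exact sequences and filtered colimits (the latter requires care, since dualization reverses colimits to limits in general, so it has to be checked in $\PR^L_{\dual}$ along strongly continuous functors).
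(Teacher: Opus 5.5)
Your final route is the paper's proof: by Efimov's uniqueness theorem, a localizing invariant on $\PR^L_{\dual}$ is determined by its restriction along $\Ind$, and $K^{\cont}\circ(-)^\vee$ restricts to $K\circ(-)^{\op}\simeq K$ by the Theorem; your worry about filtered colimits is moot, since $(-)^\vee$ is an autoequivalence of $\PR^L_{\dual}$. Drop the detours you discarded, because they contain errors: with the covariant functoriality $F\mapsto (F^R)^\vee$ that makes $(-)^\vee$ an endofunctor of $\PR^L_{\dual}$, your sequence dualizes to $\mathcal{C}^\vee\to\Ind(\mathcal{A}^{\op})\to\Ind(\mathcal{B}^{\op})$ in the same direction, so there is no fiber-versus-cofiber obstacle (the reversed sequence is made of precomposition functors, which are in general not strongly continuous, so $K^{\cont}$ does not apply to it); and $(\mathcal{C}^\vee)^{\omega_1}\simeq(\mathcal{C}^{\omega_1})^{\op}$ is false already for $\mathcal{C}=\Sp$, since $\Sp^{\omega_1}$ has countable coproducts but not countable products.
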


\begin{proof}
Every localizing invariant on  $ \PR^L_{\dual} $ is uniquely determined by its restriction via $\Ind :  \Cat^\perf \rightarrow \PR^L_{\dual} $, see \cite[Theorem 4.10]{efimov2025ktheorylocalizinginvariantslarge}. Therefore, the claim reduces to the previous theorem.
\end{proof}

\begin{example}
Let $R$ be an $E_1$-ring spectrum, or in particular an ordinary ring (unital, but not necessarily non-commutative). In this case the $\infty$-category $\mathrm{LMod}_R$ of left modules is compactly generated with $\mathrm{LMod}_R \simeq \mathrm{Ind}(\mathrm{Perf}_R)$, \cite[Proposition 7.2.4.2]{lurieha}. Write $K(R) = K(\mathrm{Perf}_R)$. Then
\[\mathrm{LMod}_R^\vee \simeq \mathrm{RMod}_R = \mathrm{LMod}_{R^{\op}},\] where $R^{\op}$ is the $E_1$-ring obtained from $R$ by reversing the order of multiplication, \cite[Proposition 7.2.4.3]{lurieha}. Hence it holds that $K(R) \simeq K(R^{\op})$.
\end{example}

However, the same claim fails for an arbitrary localizing invariant.

\begin{theorem} \label{noninvariance}
There exists a stable idempotent complete $\infty$-category $\mathcal{C}$ such that
\[ \mathcal{U}( \mathcal{C} ) \not\simeq \mathcal{U}( \mathcal{C}^{\op} )\]
where $\mathcal{U} : \Cat^\perf \rightarrow \mathrm{Mot}$ is the universal finitary localizing invariant.
\end{theorem}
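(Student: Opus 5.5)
We take $\mathcal{C} = \mathrm{Perf}_A$ for a central simple algebra $A$ over a field $k$ whose Brauer class has order greater than $2$, say a cyclic division algebra of degree $3$ over $\mathbb{Q}_p$. The strategy is to detect $\mathcal{U}(\mathcal{C})$ through the corepresentability of $K$-theory in $\mathrm{Mot}$, which turns the question into a divisibility statement about dimensions of bimodules. This is essentially the argument of Tabuada (and Tabuada--Van den Bergh) that noncommutative motives of central simple algebras detect their Brauer classes.

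First, the identification of the opposite category. The duality $M \mapsto \mathrm{Hom}_A(M, A)$ gives $\mathrm{Perf}_A^{\op} \simeq \mathrm{Perf}_{A^{\op}}$, so it suffices to show $\mathcal{U}(A) \not\simeq \mathcal{U}(A^{\op})$. Next, the corepresentability theorem of \cite{blumberg_gepner_tabuada}. For $\mathcal{A}$ smooth and proper, it gives
\[ \mathrm{Map}_{\mathrm{Mot}}(\mathcal{U}(\mathcal{A}), \mathcal{U}(\mathcal{B})) \simeq K(\mathrm{Fun}^{ex}(\mathcal{A}, \mathcal{B})). \]
Under this equivalence, composition corresponds to tensor product of bimodules. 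Since $A$ is a finite-dimensional separable $k$-algebra, it is smooth and proper, and we get
\[ \pi_0 \mathrm{Map}(\mathcal{U}(A), \mathcal{U}(B)) \cong K_0(A^{\op} \otimes_k B). \]
An equivalence $\mathcal{U}(A) \simeq \mathcal{U}(A^{\op})$ would therefore give classes $[M] \in K_0(A^{\op}\otimes A^{\op})$ and $[N] \in K_0(A \otimes A)$. These are formal differences of bimodules, and their composite $[N \otimes_{A^{\op}} M]$ must equal $[A] \in K_0(A^{\op}\otimes A)$, the class of the identity functor.

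We detect this with the $k$-dimension, which is additive and hence defined on $K_0$. Let $n = \deg A$, so $\dim_k A = n^2$. Let $i$ be the index of $A^{\otimes 2}$, whose Brauer class is $2[A]$; the same index occurs for $(A^{\op})^{\otimes 2}$.
\begin{itemize}
\item $A\otimes A$ is a matrix algebra over a division algebra of dimension $i^2$. Its unique simple module therefore has dimension $n^2 i$, so every class in $K_0(A\otimes A)$ has dimension divisible by $n^2 i$. The same holds for $K_0(A^{\op}\otimes A^{\op})$.
\item For bimodules, $\dim_k(N\otimes_{A^{\op}} M) = \dim N \cdot \dim M / n^2$, because $A^{\op}$ is simple and $M$, $N$ are free over it up to the simple module. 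This formula is bilinear, so it descends to the pairing on $K_0$.
\end{itemize}
The composite therefore has dimension in $a b\, n^2 i^2 \mathbb{Z}$, while $\dim A = n^2$. This forces $i = 1$, i.e.\ $2[A] = 0$ in $\Br(k)$, contradicting the choice of an order-$3$ class (for which $i = 3$).

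The main obstacle is pinning down the corepresentability statement precisely. We must check that it holds in the finitary localizing $\mathrm{Mot}$ (and not only in the additive motives), that $\pi_0$ of the mapping spectrum is $K_0$ of the bimodule category, and that composition in $\mathrm{Mot}$ is indeed induced by the tensor product of bimodules. I would first cite \cite{blumberg_gepner_tabuada} for the mapping spectra from $\mathcal{U}$ of a smooth proper source. For the compatibility with composition, I would argue that the equivalence is natural in the target and is induced by the universal property. The dimension computation itself is routine.
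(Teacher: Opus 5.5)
Your overall strategy is the right one: use corepresentability and then count $k$-dimensions in $K_0$. The dimension bookkeeping is correct; it is the paper's $d^2$ count in slightly different form. The gap is in the step where you apply corepresentability in the absolute category $\mathrm{Mot}$.

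The formula $\mathrm{Map}_{\mathrm{Mot}}(\mathcal{U}(\mathcal{A}),\mathcal{U}(\mathcal{B}))\simeq K(\mathrm{Fun}^{ex}(\mathcal{A},\mathcal{B}))$ needs $\mathcal{A}$ to be smooth and proper over $\mathbb{S}$, i.e.\ dualizable in $\Cat^\perf$. A finite-dimensional separable $k$-algebra is smooth and proper over $k$. However, $\mathrm{Perf}_A$ is never proper over $\mathbb{S}$, because its endomorphism spectrum $A$ has $\pi_0$ a nonzero $\mathbb{Q}$-vector space, so it is not a compact spectrum. Hence your identification $\pi_0\mathrm{Map}(\mathcal{U}(A),\mathcal{U}(B))\cong K_0(A^{\op}\otimes_k B)$ is unjustified. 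The worries you flagged (additive versus localizing motives, compatibility with composition) are not the real issue.

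The missing idea is to pass to $k$-linear motives $\mathrm{Mot}_k$, where $k$-linear corepresentability (Hoyois--Scherotzke--Sibilla) does apply. You get there by the base change functor $\mathrm{Mot}\to\mathrm{Mot}_k$, $\mathcal{U}(\mathcal{C})\mapsto\mathcal{U}_k(\mathcal{C}\otimes\mathrm{Perf}_k)$, which preserves equivalences. For it to send $\mathcal{U}(\mathrm{Perf}_A)$ to $\mathcal{U}_k(\mathrm{Perf}_A)$ you need $A\otimes_{\mathbb{S}}k\simeq A$. This holds for $k=\mathbb{Q}$ because $\mathbb{Q}$ is an idempotent $\mathbb{S}$-algebra. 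Equivalently, $\mathcal{U}(\mathrm{Perf}_\mathbb{Q})$ is an idempotent algebra in $\mathrm{Mot}$, and $\mathcal{U}(A)$ is dualizable among its modules.

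This is exactly where your chosen example fails. We have $\mathbb{Q}_p\otimes_{\mathbb{S}}\mathbb{Q}_p\simeq\mathbb{Q}_p\otimes_{\mathbb{Q}}\mathbb{Q}_p$, which is far from $\mathbb{Q}_p$, so three things go wrong:
\begin{itemize}
\item base change yields $\mathcal{U}_{\mathbb{Q}_p}(A\otimes_{\mathbb{Q}}\mathbb{Q}_p)$ rather than $\mathcal{U}_{\mathbb{Q}_p}(A)$;
\item an abstract equivalence in $\mathrm{Mot}$ has no reason to be $\mathbb{Q}_p$-linear;
\item $A$ is infinite-dimensional over $\mathbb{Q}$, so the rational version of corepresentability does not apply either.
\end{itemize}
You therefore need a central division algebra over $\mathbb{Q}$ itself whose Brauer class has order $>2$, and producing one requires a global input. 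The paper uses the Albert--Brauer--Hasse--Noether sequence with local invariants $\tfrac13$ at $2$, $-\tfrac13$ at $3$, and $0$ elsewhere to get an order-$3$ class in $\Br(\mathbb{Q})$. With $k=\mathbb{Q}$ and the reduction from $\mathrm{Mot}$ to $\mathrm{Mot}_\mathbb{Q}$ inserted, your argument goes through.
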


The proof of Theorem \ref{noninvariance} will be given in the following section, and follows as a special case of the work of Tabuada, \cite{Tabuada2014Additive}. Before we give the proof, we would like to leave the reader with an open question:

\begin{question}
What is a practical criterion for a dualizable $\infty$-category $\mathcal{C}$ which holds for Examples \ref{example1} and \ref{example2} and which implies that $\mathcal{U}^{\cont}( \mathcal{C} ) \simeq \mathcal{U}^{\cont}( \mathcal{C}^\vee )$?
\end{question}

\noindent \textbf{Acknowledgments.} 
We would like to thank Christoph Winges for a discussion that led to this remark, and Vladimir Sosnilo for communicating the counterexample for Theorem \ref{noninvariance} to us. We also want to thank Maxime Ramzi and Thomas Nikolaus for helpful comments.

The author was funded by the Deutsche Forschungsgemeinschaft (DFG, German Research Foundation) – Project-ID 427320536–SFB 1442, as well as under Germany's Excellence Strategy EXC 2044/2 - 390685587, Mathematics Münster: Dynamics-Geometry-Structure.

\section*{A counterexample}

Let us recall some general facts about the Brauer group of a field $k$. A general reference is \cite[Section 2.4]{GilleSzamuely2006}. The Brauer group $\mathrm{Br}(k)$ of $k$ is an abelian group given by the set of equivalence classes of finite-dimensional central division algebras $D$ up to Brauer equivalence. The multiplication is defined via $[D][D'] = [D'']$ where $D''$ is the unique central division algebra such that
$ D \otimes_k D' \cong M_{n}(D''),$ where $M_{n}(D'')$ denotes the $n \times n$-matrix algebra of $D''$. The inverse of a central division algebra $D$ is given by $D^{\op}$.


The following theorem is crucial for the computation of Brauer groups of global fields.

\begin{theorem}[Albert--Brauer--Hasse--Noether, {\cite[Thm.~8.1.17]{NSW2008}}]
Let $K$ be a global field. Then there is an exact sequence
\[
0 \longrightarrow \Br(K)
  \longrightarrow \bigoplus_{v} \Br(K_v)
  \xrightarrow{\ \sum_v \operatorname{inv}_v\ }
  \mathbb{Q}/\mathbb{Z}
  \longrightarrow 0,
\]
where $v$ runs over all places of $K$, the first map is the sum of the
restriction maps $\Br(K)\to \Br(K_v)$, and $\operatorname{inv}_v$ is the local
Hasse invariant map. For a finite place $v$, the map
\[
\operatorname{inv}_v \colon \Br(K_v)\xrightarrow{\sim} \mathbb{Q}/\mathbb{Z}
\]
is an isomorphism; for a real place $v$, its image is
$\frac{1}{2}\mathbb{Z}/\mathbb{Z}\subset \mathbb{Q}/\mathbb{Z}$; and for a
complex place $v$, it is zero.
\end{theorem}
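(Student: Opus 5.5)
This is the classical reciprocity theorem of global class field theory. The plan is to obtain it as a colimit over finite Galois extensions of the long exact cohomology sequence attached to the ideles, and to isolate the analytic input (the second inequality) and the reciprocity law as the two non-formal steps.

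\textbf{Local step.} Write $\Br(k) = H^2(G_k, \bar k^\times)$ via crossed products, as in \cite[Section 2.4]{GilleSzamuely2006}, so that $\Br(k) = \varinjlim_L H^2(\mathrm{Gal}(L/k), L^\times)$ over finite Galois $L/k$. For a finite place $v$, every class in $\Br(K_v)$ is split by an unramified extension, because $H^2$ of the units of an unramified extension vanishes. Hence
\[ \Br(K_v) = H^2(\hat{\mathbb{Z}}, K_v^{\mathrm{nr}\times}) \xrightarrow{\ \mathrm{val}\ } H^2(\hat{\mathbb{Z}}, \mathbb{Z}) \cong H^1(\hat{\mathbb{Z}}, \mathbb{Q}/\mathbb{Z}) \cong \mathbb{Q}/\mathbb{Z}, \]
where the last isomorphism is evaluation at Frobenius; this defines $\operatorname{inv}_v$ and shows it is an isomorphism. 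For a real place, $\Br(\mathbb{R}) = \hat H^0(\mathbb{Z}/2, \mathbb{C}^\times) = \mathbb{R}^\times/\mathbb{R}_{>0} \cong \tfrac12\mathbb{Z}/\mathbb{Z}$. For a complex place, $\Br(\mathbb{C}) = 0$.

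\textbf{Global step.} For a finite Galois extension $L/K$ with group $G$, I take $G$-cohomology of $0 \to L^\times \to \mathbb{I}_L \to C_L \to 0$. This gives
\[ H^1(G, C_L) \to H^2(G, L^\times) \to H^2(G, \mathbb{I}_L) \to H^2(G, C_L). \]
By Shapiro's lemma and the vanishing of the cohomology of local units at unramified places, $H^2(G, \mathbb{I}_L) \cong \bigoplus_v H^2(G_w, L_w^\times)$, where $w$ is a chosen place of $L$ above $v$.

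The analytic input is the second fundamental inequality. Via $L$-functions (or Chevalley's algebraic argument) one gets $[C_K : N C_L] \le [L:K]$ for cyclic $L/K$. Together with the Herbrand quotient computation (the first inequality), this gives $H^1(G, C_L) = 0$ and shows that $H^2(G, C_L)$ is cyclic of order dividing $[L:K]$. Passing to the colimit over $L$ yields injectivity of $\Br(K) \to \bigoplus_v \Br(K_v)$.

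Next I define $\operatorname{inv}_K \colon H^2(K, C) \to \mathbb{Q}/\mathbb{Z}$ by the formula $\sum_v \operatorname{inv}_v$ on images of idelic classes. The key claim is the reciprocity law: $\sum_v \operatorname{inv}_v$ vanishes exactly on the image of $\Br(K)$. To show vanishing on $\Br(K)$, I first show that every global Brauer class is split by a cyclic cyclotomic extension. Such an extension exists with prescribed local degrees, by choosing suitable subfields of $K(\zeta_m)$. This reduces the claim to cyclic algebras $(\chi, a)$ with $\chi$ a cyclotomic character. There $\sum_v \operatorname{inv}_v(\chi, a) = \chi(\prod_v (a, K(\zeta_m)/K)_v)$, and the vanishing becomes the explicit reciprocity for cyclotomic extensions of $\mathbb{Q}$, which is checked directly on the generators $a = -1$, $a = \ell$ prime, using the decomposition of primes in $\mathbb{Q}(\zeta_m)$. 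Exactness in the middle then follows by comparing orders: $H^2(G, \mathbb{I}_L)/\mathrm{im}\, H^2(G, L^\times)$ injects into $H^2(G, C_L)$, which has order at most $[L:K]$. It also surjects onto $\tfrac{1}{[L:K]}\mathbb{Z}/\mathbb{Z}$ via $\sum_v \operatorname{inv}_v$, because of the cyclotomic local-degree choice. Surjectivity of $\sum_v \operatorname{inv}_v$ onto $\mathbb{Q}/\mathbb{Z}$ is immediate, since any single finite place already gives an isomorphism.

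\textbf{Main obstacle.} I expect the second inequality and the reciprocity law to be the hard part. The rest is formal cohomological bookkeeping. In the proof I would import the second inequality from \cite{NSW2008} rather than reprove it, and spend the effort on the reduction to cyclotomic extensions.
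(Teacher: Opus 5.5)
The paper gives no proof of this statement. It quotes it as a black box from the cited reference, where it follows from the class formation $(G_K,C)$ and the idele class sequence, and it only uses the case $K=\mathbb{Q}$. Your outline follows Tate's classical route, and the global bookkeeping is sound. $H^1(G,C_L)=0$ for all finite Galois $L$ gives the Hasse principle; you do need the usual Sylow/d\'evissage step from the cyclic case here, since cyclic extensions are not cofinal. For cyclic $L$, the bound $|H^2(G,\mathbb{I}_L)/\mathrm{im}\,H^2(G,L^\times)|\le [C_K:NC_L]\le [L:K]$, together with reciprocity, gives exactness in the middle. Surjectivity of $\sum_v\operatorname{inv}_v$ onto $\frac{1}{[L:K]}\mathbb{Z}/\mathbb{Z}$ holds for every cyclic $L$, with no special choice needed. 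If all decomposition groups lay in a proper subgroup $H$, then $L^H/K$ would be a nontrivial cyclic extension split at every place, so $C_K=NC_{L^H}$, contradicting the first inequality. However, several steps do not go through as written.

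\textbf{The local step.} The reason you give does not prove the claim. Vanishing of $H^2(\mathrm{Gal}(L/K_v),U_L)$ for unramified $L$ only identifies $\Br(K_v^{\mathrm{nr}}/K_v)$ with $\mathbb{Q}/\mathbb{Z}$. It says nothing about classes split only by ramified extensions, so it does not show that $\operatorname{inv}_v$ is an isomorphism on all of $\Br(K_v)$. You also need that $|H^2(\mathrm{Gal}(L/K_v),L^\times)|$ divides $n=[L:K_v]$ for every finite Galois $L$. This comes from the Herbrand quotient $h(L^\times)=n$ and Hilbert 90 in the cyclic case, followed by d\'evissage. Then, since restriction to $L$ multiplies invariants by $n$, the unramified classes with invariant in $\frac1n\mathbb{Z}/\mathbb{Z}$ lie in $\Br(L/K_v)$ and, by counting, exhaust it. Alternatively, you can invoke Lang's theorem $\Br(K_v^{\mathrm{nr}})=0$.

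\textbf{The reciprocity check.} For $\mathbb{Q}(\zeta_{p^r})/\mathbb{Q}$, this cannot be done from the decomposition of primes alone. For $a=\ell\neq p$ (resp.\ $a=-1$), the Frobenius at $\ell$ (resp.\ complex conjugation) is cancelled by the local symbol at the totally ramified place $p$. Computing that symbol requires explicit local class field theory, namely $(u,\mathbb{Q}_p(\zeta_{p^r})/\mathbb{Q}_p)\colon\zeta\mapsto\zeta^{u^{-1}}$ for $u\in\mathbb{Z}_p^\times$ (via Lubin--Tate or Dwork), together with $p=N(1-\zeta_{p^r})$. This is the real content of the step and has to be imported explicitly. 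So does the norm compatibility of local symbols, which you use to reduce from $K$ to $\mathbb{Q}$.

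\textbf{Function fields.} The statement is for all global fields, but in positive characteristic there is no reduction to $\mathbb{Q}$ and no generators $-1,\ell$. There you should use constant field extensions $K\mathbb{F}_{q^n}/K$. These are cyclic and unramified, with local degree $n/\gcd(n,\deg v)$, and reciprocity reduces to $\sum_v \deg(v)\,v(a)=\deg\operatorname{div}(a)=0$.

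\textbf{The global invariant.} Defining $\operatorname{inv}_K$ on $H^2(K,C)$ ``on images of idelic classes'' presupposes that $H^2(K,\mathbb{I})\to H^2(K,C)$ is surjective. Equivalently, it presupposes that $H^3(K,\bar K^\times)\to H^3(K,\mathbb{I})$ is injective, which you never establish. Since the statement does not involve $H^2(K,C)$, that sentence should simply be dropped.
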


We can apply the above theorem to $k = \mathbb{Q}$. Picking the values
\[ \operatorname{inv}_2(\alpha) = \frac{1}{3},~~  \operatorname{inv}_3(\alpha) = -\frac{1}{3}, ~~ \operatorname{inv}_v(\alpha) = 0 \text{ for } v \neq 2, 3 \]
we get the existence of a non-zero element $\alpha \in \Br(\mathbb{Q})$ of order $3$, which is represented by a central division algebra $A$ over $\mathbb{Q}$.

Let us make two more comments that we will use.

\begin{itemize}
\item Let $k$ be an $E_\infty$-ring spectrum. Recall that a $k$-linear stable, idempotent complete $\infty$-category $\mathcal{C}$ is called \emph{smooth and proper} if it is dualizable as an object of $\Cat^\perf_{k}$. In this case, there is a natural equivalence
\[ \mathrm{map}_{\mathrm{Mot}_k}( \mathcal{U}_k( \mathcal{C}), \mathcal{U}_k( \mathcal{D}) ) \simeq K( \Fun^{\mathrm{ex}}_k( \mathcal{C}, \mathcal{D})),\]
see \cite[Theorem 5.25]{HoyoisScherotzkeSibilla2017HigherTraces}, where $\mathcal{U}_k : \Cat^\perf_{k} \rightarrow \mathrm{Mot}_k$ is the universal localizing invariant for $k$-linear stable idempotent complete $\infty$-categories.
\item If $A$ is a finite-dimensional central division algebra over $\mathbb{Q}$, it holds that $A$ is smooth and proper, by the observation that $\mathrm{Perf}_{A^{\op} \otimes_\mathbb{Q} A} \simeq \mathrm{Perf}_\mathbb{Q}$. Write $[M,N] = \pi_0 \mathrm{map}_{\mathrm{Mot}_k}( M, N)$ for $M, N$ in $\mathrm{Mot}_\mathbb{Q}$. Applying the previous result, we see that $ [ \mathcal{U}_\mathbb{Q}(A), - ] \simeq  K_0( \mathrm{Fun}^{\mathrm{ex}}( \mathrm{Perf}_A, - )).$
\end{itemize}

\begin{proposition} \label{notofordertwo}
Let $A$ be a finite-dimensional central division algebra over $\mathbb{Q}$ that is not of order $2$ in $\mathrm{Br}(\mathbb{Q})$. Then $\mathcal{U}_\mathbb{Q}(A) \not\simeq \mathcal{U}_\mathbb{Q}(A^{\op})$ in $\mathrm{Mot}_\mathbb{Q}$.
\end{proposition}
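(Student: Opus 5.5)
Suppose for contradiction that $\mathcal{U}_\mathbb{Q}(A) \simeq \mathcal{U}_\mathbb{Q}(A^{\op})$ in $\mathrm{Mot}_\mathbb{Q}$. The idea is to detect this equivalence by corepresenting $K_0$ via the Hoyois--Scherotzke--Sibilla formula recalled above, and then to show that the resulting isomorphism of Grothendieck groups would force $[A] = [A^{\op}]$ in $\mathrm{Br}(\mathbb{Q})$.

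First we compute the corepresented functors. For any central simple algebra $B$ over $\mathbb{Q}$ we have $\mathrm{Fun}^{\mathrm{ex}}_\mathbb{Q}(\mathrm{Perf}_A, \mathrm{Perf}_B) \simeq \mathrm{Perf}_{A^{\op}\otimes_\mathbb{Q} B}$, computed via bimodules, up to the left/right convention. Hence
\[ [\mathcal{U}_\mathbb{Q}(A), \mathcal{U}_\mathbb{Q}(B)] \cong K_0(A^{\op}\otimes_\mathbb{Q} B) \cong \mathbb{Z}. \]
The algebra $A^{\op}\otimes B \cong M_n(D)$ is simple, with $D$ the division algebra representing $[B]-[A]$. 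Its $K_0$ is generated by the class of the simple module $S$, and the free module satisfies $[A^{\op}\otimes B] = n[S]$.

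Next we bring in composition and identities. Composition in $\mathrm{Mot}_\mathbb{Q}$ corresponds on $K_0$ to tensor product of bimodules. The identity of $\mathcal{U}_\mathbb{Q}(A)$ is the class of the diagonal bimodule $A$ in $K_0(A^{\op}\otimes A) \cong K_0(\mathbb{Q}) = \mathbb{Z}$, via the Morita equivalence $A^{\op}\otimes A \cong M_d(\mathbb{Q})$ with $d = \dim_\mathbb{Q} A$. The diagonal bimodule corresponds to the generator, since $A$ is the simple $A^{\op}\otimes A$-module. Now take $B = A^{\op}$. An equivalence consists of $f \in K_0(A^{\op}\otimes A^{\op})$ and $g \in K_0(A\otimes A)$ with $g\circ f = \mathrm{id}$. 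These groups are $\mathbb{Z}[S]$ and $\mathbb{Z}[S']$, with $S, S'$ simple modules over $M_m(D)$ and $M_m(D^{\op})$ respectively, where $D$ represents $2[A^{\op}]$ and $D^{\op}$ represents $2[A]$.

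The composition pairing is the step I expect to be the main obstacle. We must compute
\[ K_0(A^{\op}\otimes A^{\op}) \times K_0(A\otimes A) \to K_0(A^{\op}\otimes A) = \mathbb{Z}, \]
which sends $([S],[S'])$ to $[S\otimes_{A^{\op}} S']$, or $S'\otimes_A S$ depending on the order of composition. I would compute it by counting $\mathbb{Q}$-dimensions. An $A^{\op}\otimes A$-module $M$ has class $\dim_\mathbb{Q}(M)/d$ in $\mathbb{Z}$. We have $\dim S = d^2/m$ with $m$ the matrix size, and $\dim_\mathbb{Q} D = (d/m)^2$; similarly for $S'$. The tensor product over $A$ of dimension $d$ then has dimension
\[ \frac{(d^2/m)^2}{d} = \frac{d^3}{m^2}, \]
so its class is $d^2/m^2 = \dim_\mathbb{Q} D = e^2$, where $e$ is the index of $D$.

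Hence $g\circ f$ is divisible by $e^2$. Since $g\circ f = \mathrm{id} = 1$, we need $e = 1$, i.e.\ $D$ is split, i.e.\ $2[A] = 0$ in $\mathrm{Br}(\mathbb{Q})$. This contradicts the hypothesis that $[A]$ is not of order $2$. (If $[A]=0$ the hypothesis is also satisfied, but then $A \cong \mathbb{Q}$ and $A = A^{\op}$; I read the hypothesis as excluding order dividing $2$, which is what the argument needs.) The care needed is in checking the dimension count and the Morita identification of the identity with the generator.
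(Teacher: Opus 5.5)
Your proof is correct and takes the same route as the paper: you identify the mapping groups with $K_0$ of $A^{\op}\otimes_\mathbb{Q} A^{\op}$, $A\otimes_\mathbb{Q} A$ and $A^{\op}\otimes_\mathbb{Q} A$ via Hoyois--Scherotzke--Sibilla, and you show that the composition pairing into $K_0(A^{\op}\otimes_\mathbb{Q} A)\cong\mathbb{Z}$ cannot hit the identity. Your explicit dimension count, your identification of the identity with the diagonal bimodule, and your reading of the hypothesis as $2[A]\neq 0$ are more careful than the paper's version: the image is exactly $(\dim_\mathbb{Q} D)\mathbb{Z}$, whereas the paper asserts the factor $(\dim_\mathbb{Q} D)^2$, but any factor larger than $1$ gives the same conclusion.
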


\begin{proof}
The condition that $A$ is not of order $2$ in $\mathrm{Br}(\mathbb{Q})$ is  equivalent to saying that $ A \otimes_\mathbb{Q} A \cong M_n(D)$ for some central division algebra $D \neq \mathbb{Q}$. In particular, $d = \mathrm{dim}_\mathbb{Q}(D) > 1$.

The composition
\[ [ \mathcal{U}_\mathbb{Q}(A^{\op}),  \mathcal{U}_\mathbb{Q}(A) ] \times [ \mathcal{U}_\mathbb{Q}(A),  \mathcal{U}_\mathbb{Q}(A^{\op}) ] \rightarrow [ \mathcal{U}_\mathbb{Q}(A), \mathcal{U}_\mathbb{Q}(A) ] \]
is identified with
$$ K_0( A \otimes_\mathbb{Q} A  ) \times K_0( A^{\op} \otimes_\mathbb{Q} A^{\op}  ) \rightarrow K_0( A^{\op} \otimes_\mathbb{Q} A  )  \cong K_0( \mathbb{Q}  ) \cong \mathbb{Z}.$$
The group  $K_0( A \otimes_\mathbb{Q} A  )$ is generated by $[D]$ and the group $K_0( A^{\op} \otimes_\mathbb{Q} A^{\op}  )$ is generated by $[D^{\op}]$. The product of these two elements is $[M_{d}(\mathbb{Q})] = d^2[1]$ in $K_0( \mathbb{Q}  )$. Hence the multiplication map identifies with
$$ \begin{array}{rcl}
\mathbb{Z} \times \mathbb{Z} & \rightarrow & \mathbb{Z} \\
(n, m) & \mapsto & d^2 nm,
\end{array} $$
which does not hit $1 \in \mathbb{Z}$, since $d > 1$. In other words, the identity in $[ \mathcal{U}_\mathbb{Q}(A), \mathcal{U}_\mathbb{Q}(A) ]$ does not lie in the image of the composition, and $\mathcal{U}_\mathbb{Q}(A)$ cannot be equivalent to $\mathcal{U}_\mathbb{Q}(A^{\op})$ in $\mathrm{Mot}_\mathbb{Q}$.
\end{proof}

The unit map $\mathbb{S} \rightarrow \mathbb{Q}$ exhibits $\mathbb{Q}$ as an idempotent algebra in $\Sp$. Therefore it holds that the tensor product on $\Sp$ restricts to the usual tensor product on $\mathrm{LMod}_\mathbb{Q}$ and $A \otimes_\mathbb{S} A' \simeq A \otimes_\mathbb{Q} A'$ for any two $\mathbb{Q}$-algebras $A, A'$, \cite[Proposition 4.8.2.7]{lurieha}. In particular, it holds that $A \otimes_\mathbb{S} \mathbb{Q} \simeq A$.

\begin{proof}[Proof of Theorem \ref{noninvariance}]
The claim $\mathcal{U}(A) \not\simeq \mathcal{U}(A^{\op})$ follows at once from the previous proposition, as otherwise we would obtain $\mathcal{U}_\mathbb{Q}(A) \simeq \mathcal{U}_\mathbb{Q}(A^{\op})$ via base change along $\mathbb{S} \rightarrow \mathbb{Q}$.
\end{proof}

\begin{remark}
The proof strategy given here can be generalized substantially. In fact, for every field $k$, the map
$$ \mathcal{U}_k : \Br(k) \rightarrow \mathrm{Pic}(\mathrm{Mot}_k)$$
is injective, see \cite[Theorem 9.1]{Tabuada2014Additive}, also \cite{Ramzi2025Localizing} for a more structural overview. The map $\mathcal{U}_k$ sends inverses to $(-)^{\op}$, thus giving Proposition \ref{notofordertwo} as a corollary. One can further generalize by replacing $k$ by a qcqs scheme $X$ to produce an even richer supply of counterexamples, see \cite[p.\ 3]{Ramzi2025Localizing}.
\end{remark}

\begingroup
\setlength{\emergencystretch}{8em}
\printbibliography
\endgroup

\end{document}